\newtheorem{theorem}{Theorem}[section]
\newtheorem{proposition}[theorem]{Proposition}
\newtheorem{lemma}[theorem]{Lemma}
\newtheorem{corollary}[theorem]{Corollary}
\theoremstyle{definition}
\newtheorem{definition}[theorem]{Definition}
\newtheorem{example}[theorem]{Example}
\theoremstyle{remark}
\newtheorem{remark}{Remark}
\numberwithin{figure}{section}
\newcommand{\subom}{_{\vert\Omega}}
\newcommand{\cat}{\mathop{\mathrm{cat}}}
\newcommand{\TC}{\mathop{\mathrm{TC}}}
\newcommand{\sd}{\mathop{\mathrm{sd}}}
\newcommand{\inv}{^{-1}}
\newcommand{\geo}[1]{\vert #1 \vert}
\newcommand{\catpro}[1]{#1\, \Pi\, #1}
\newcommand{\scat}{\mathop{\mathrm{scat}}}
\newcommand{\secat}{\mathop{\mathrm{secat}}}
\newcommand{\hsecat}{\mathop{\mathrm{hsecat}}}
\title[Discrete Topological complexity]{Discrete Topological complexity}
\author{D.~Fern\'andez-Ternero \and E.~Mac\'{\i}as-Virg\'os \and E.~Minuz \and J.A.~Vilches}
\date{\today}
\begin{document}


\begin{abstract}We introduce a notion of discrete topological complexity in the setting of simplicial complexes, using only the combinatorial structure of the complex by means of the concept of contiguous simplicial maps. We study the links of this new invariant with those of simplicial and topological LS-category.
\end{abstract}

\keywords{Topological complexity; Simplicial complex; Contiguous maps; LS-category}

\subjclass[2010]{
55R80,	
55U10,     
55M30,      
}

\maketitle

\setcounter{tocdepth}{1}
\tableofcontents

\section{Introduction}

Topological complexity, introduced by Farber \cite{FARBER1}, is a topological invariant defined to solve
problems in robotics such as motion planning. For this purpose one needs an algorithm that, for each  pair of points of the so-called   configuration space of a mechanical or physical device, computes a path connecting them,  in a continuous way. The  key idea was to interpret
that algorithm in terms of a section of the so-called path-fibration, which is a well-known map in algebraic topology.

The aim of the present paper is to establish a discrete version of this approach. This  is interesting because many motion planning methods transform a continuous problem into a discrete one.  Finite simplicial complexes are the proper setting to develop a discrete version of topology.
The main technical point is  to avoid the  construction of a path-space $PK$ associated to the simplicial complex $K$. To do so, we use a different but equivalent characterization of topological complexity, as explained  in Section \ref{UNO}.

In Section \ref{INVARIANT} we prove that the new invariant $\TC(K)$ only depends on the strong homotopy type of $K$, as defined by Barmak and Minian \cite{BM}. In Section \ref{CATEGORY} we compare this new invariant with  the simplicial LS-category of $K$, defined by us in two previous papers \cite{FMV1,FMMV2}, thus giving a simplicial version of Farber's well known results \cite{FARBER1}. Finally, in Section \ref{GEOM}, $\TC(K)$ is compared with the topological complexity $\TC(\vert K \vert)$ of the geometric realization of the complex $K$.

\medskip

\paragraph{\em Acknowledgements}
We thank Nick Scoville for useful conversations, and Jes\'us Gonz\'alez for pointing out us the reference \cite{JESUSGONZ}. Corollary \ref{SAMEHOM} was pointed out to the second author by John Oprea and inspired our definition of the discrete
topological complexity.

The first and the fourth authors were partially supported by MINECO Spain Research Project MTM2015-65397-P and Junta de Andaluc\'{\i}a Research Groups FQM-326 and FQM-189. The second  author was partially supported by MINECO Spain Research Project MTM2016-78647-P and FEDER and by Xunta de Galicia GPC2015/006. The third author was partially supported by
DFF-Research Project Grants from the Danish Council for Independent Research.

\section{Preliminaries} \label{UNO}

\subsection{Topological complexity}\label{TOPBACK}
We include here some motivational remarks.

Farber's  topological complexity \cite{FARBER1,FARBER2}  is  a particular case of the  \v{S}varc genus or sectional category of a map \cite{CLOT,SVARC}.
\begin{definition}
The {\em \v{S}varc genus} $\secat(f)$ of a map $f\colon X \to Y$  is the minimum integer number $n\geq 0$ such that the codomain $Y$ can be covered by open sets $V_0,\dots,V_n$ with the property that over each $V_j$ there exists a local section $s_j$ of $f$ (that is, a continuous map $s_j\colon V_j \to X$ such that $f\circ s_j=\iota_j$, where $\iota_j\colon V_j\subset Y$ is the inclusion).
\end{definition}

\begin{definition}The {\em topological complexity} of a topological space $X$ is $\TC(X)=\secat(\pi)$, where $\pi\colon PX \to X\times X$ is the  so-called path fibration, that is, the map sending an arbitrary path $\gamma\colon [0,1] \to X$ into the pair $(\gamma(0),\gamma(1))$ formed by the initial and the final points of the path.
\end{definition}

\begin{remark}
It is common in algebraic topology  to consider a normalized version of concepts such as \v{S}varc genus, topological complexity and  LS-category $\cat X$ is often used, as in \cite{CLOT}, in such a way that contractible spaces have category zero. This is the convention we followed in our papers \cite{FMV1,FMMV2} and we will maintain  it here. However, sometimes a non-normalized definition (which is equivalent to $\cat X+1$) can be used in some papers, as Farber did in \cite{FARBER1}.
\end{remark}

An  important result is that for some topological spaces (including  the geometric realization of  any finite simplicial complex) the topological complexity can be computed by taking {\em closed} subspaces  instead of open subspaces. This is discussed in \cite[Chapter 4]{FARBER2}.

Now we proceed to modify the definition of sectional category .
\begin{definition}
The {\em homotopic \v{S}varc genus} of the map $f\colon X \to Y$, denoted by $\hsecat(f)$,  is the minimum integer number $n\geq 0$ such that there exists an open covering of the codomain $Y=V_0\cup\dots\cup V_n$, with the property that for each $V_j$ there exists a local {\em homotopic} section $s_j$, that is, a continuous map $s_j\colon V_j \to X$ such that  there is a homotopy $f\circ s_j\simeq \iota_j$, where $\iota_j\colon V_j\subset Y$ is the inclusion.
\end{definition}
Clearly $\hsecat(f)\leq \secat(f)$. For a particular class of maps both invariants coincide.

\begin{proposition}If $\pi\colon X\to Y$ is a fibration (that is, a map with the homotopy lifting property) then $\hsecat(\pi)=\secat(\pi)$. In particular this is true for the path fibration $\pi \colon PX \to X\times X$.
\end{proposition}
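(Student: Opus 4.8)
The plan is to establish the reverse inequality $\secat(\pi)\leq\hsecat(\pi)$, since the opposite inequality $\hsecat(\pi)\leq\secat(\pi)$ has already been observed (any strict local section is in particular a homotopic one). Equality then follows at once, and the final clause about the path fibration is a consequence of the standard fact that $\pi\colon PX\to X\times X$ satisfies the homotopy lifting property, so that the general statement applies to it.

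To prove the reverse inequality, I would start from the assumption $\hsecat(\pi)=n$ and fix an open covering $Y=V_0\cup\dots\cup V_n$ together with homotopic local sections $s_j\colon V_j\to X$, so that for each $j$ there is a homotopy witnessing $\pi\circ s_j\simeq\iota_j$. The goal is to upgrade each $s_j$ to a \emph{strict} section over the \emph{same} set $V_j$; since the covering itself is left unchanged, this immediately gives $\secat(\pi)\leq n$, as desired.

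The heart of the argument is a single application of the homotopy lifting property. For a fixed $j$, let $H_j\colon V_j\times[0,1]\to Y$ be a homotopy with $H_j(-,0)=\pi\circ s_j$ and $H_j(-,1)=\iota_j$. Taking $s_j$ as the initial lift, and noting that $\pi\circ s_j=H_j(-,0)$ is the correct starting value, the HLP of $\pi$ with respect to $V_j$ produces a lifted homotopy $\widetilde{H}_j\colon V_j\times[0,1]\to X$ satisfying $\widetilde{H}_j(-,0)=s_j$ and $\pi\circ\widetilde{H}_j=H_j$. Setting $s_j'\eqdef\widetilde{H}_j(-,1)$ then gives $\pi\circ s_j'=H_j(-,1)=\iota_j$, so $s_j'$ is a genuine section of $\pi$ over $V_j$.

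I do not anticipate a serious obstacle here: the whole content is the observation that HLP allows one to slide a homotopic section along its witnessing homotopy into an honest one. The only points demanding mild care are bookkeeping matters—orienting the homotopy so that it starts at $\pi\circ s_j$ and ends at the inclusion $\iota_j$, and recording that the fibration hypothesis supplies HLP for the particular domain $V_j$ (which it does, since a fibration lifts homotopies over arbitrary spaces, as in the definition given). Once these are in place, reassembling the upgraded sections $s_j'$ over the original cover completes the proof.
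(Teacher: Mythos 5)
Your proof is correct: the homotopy lifting property applied to the witnessing homotopy $H_j$ with initial lift $s_j$ is exactly the standard way to upgrade a homotopic local section to a strict one over the same open set, and the easy inequality $\hsecat(\pi)\leq\secat(\pi)$ handles the other direction. The paper states this proposition without proof, and your argument is precisely the intended one, so there is nothing to correct beyond what you have already noted.
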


Now, it is well known that any map factors, up to homotopy equivalence, through a fibration. We will apply it to the particular case of the diagonal map $\Delta_X \colon X \to X\times X$.

\begin{proposition} There is a homotopy equivalence $X\simeq PX$ such that the   diagram in Figure \ref{DIAGPROJ} commutes up to homotopy (the maps are $c(x)=x$, the constant path, and $\alpha(\gamma)=\gamma(0)$, the initial point).
\begin{figure}[h]
        $\xymatrix{
        X\ar@<+1ex>[r]^{\ c\ }\ar@<-2ex>[rd]_{\Delta_X}&\ PX\ \ar[d]^{\pi}\ar@<+1ex>[l]^{\ \alpha\
        } \\
        &\ \ X\times X\
        }$
        \caption{} \label{DIAGPROJ}
        \end{figure}
\end{proposition}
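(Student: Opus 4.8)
The plan is to show that the two maps $c$ and $\alpha$ are mutually homotopy inverse, and that the relevant triangle of the diagram already commutes strictly (so commutativity up to homotopy is automatic). First I would record the two compositions that are in fact equalities. For every $x\in X$ one has $\alpha(c(x))=c(x)(0)=x$, so $\alpha\circ c=\id_X$ on the nose; and $\pi(c(x))=(c(x)(0),c(x)(1))=(x,x)=\Delta_X(x)$, so $\pi\circ c=\Delta_X$ exactly. The latter identity gives the commutativity of the triangle formed by $c$, $\pi$ and $\Delta_X$ in \figref{DIAGPROJ}.

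The only substantial point is the homotopy $c\circ\alpha\simeq\id_{PX}$. For this I would exhibit the explicit reparametrization homotopy $H\colon PX\times[0,1]\to PX$ defined by $H(\gamma,t)(s)=\gamma(ts)$. At $t=0$ this collapses $\gamma$ to the constant path at its initial point, i.e. $H(\gamma,0)=c(\alpha(\gamma))$, while at $t=1$ it returns $\gamma$ unchanged, so $H(\gamma,1)=\gamma$. Together with the identity $\alpha\circ c=\id_X$ from the first step, this shows that $c$ and $\alpha$ are homotopy inverses and hence $X\simeq PX$.

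The one step requiring genuine care — the main, if modest, obstacle — is the continuity of $H$ with respect to the compact-open topology on $PX$. I would settle this by the exponential law: $H$ is adjoint to the map $PX\times[0,1]\times[0,1]\to X$ sending $(\gamma,t,s)$ to $\gamma(ts)$, which factors as the evaluation map $PX\times[0,1]\to X$ (continuous because $[0,1]$ is locally compact Hausdorff) precomposed with $\id_{PX}\times m$, where $m\colon[0,1]\times[0,1]\to[0,1]$, $m(t,s)=ts$, is the continuous multiplication. Hence $H$ is continuous, which is all that is needed.

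Finally, I would observe that the same homotopy handles the remaining face of the diagram involving $\alpha$: since $\pi(H(\gamma,t))=(H(\gamma,t)(0),H(\gamma,t)(1))=(\gamma(0),\gamma(t))$, the composite $\pi\circ H$ is a homotopy from $\Delta_X\circ\alpha$ (at $t=0$) to $\pi$ (at $t=1$). Thus every triangle and square in \figref{DIAGPROJ} commutes up to homotopy, completing the proof.
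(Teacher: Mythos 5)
Your proof is correct and complete: the strict identities $\alpha\circ c=\id_X$ and $\pi\circ c=\Delta_X$, together with the reparametrization homotopy $H(\gamma,t)(s)=\gamma(ts)$ (whose image under $\pi$ also homotopes $\Delta_X\circ\alpha$ to $\pi$), constitute exactly the standard argument for this statement. The paper itself states the proposition without proof, treating it as a well-known fact about the path fibration, and your write-up --- including the continuity check for $H$ via the exponential law --- supplies precisely the expected details.
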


\begin{corollary}\label{SAMEHOM}The maps $\pi$ and $\Delta_X$ have the same homotopic \v{S}varc genus, and both coincide with the topological complexity of $X$,
$$\hsecat(\Delta_X)=\hsecat(\pi)=\secat(\pi)=\TC(X).$$
\end{corollary}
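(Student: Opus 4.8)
The plan is to read the chain of equalities from right to left, deriving each from the material already established so that only the leftmost identity $\hsecat(\Delta_X) = \hsecat(\pi)$ requires genuine work. The rightmost equality $\secat(\pi) = \TC(X)$ is immediate, being the very definition of topological complexity. The middle equality $\hsecat(\pi) = \secat(\pi)$ is exactly the content of the preceding proposition, since $\pi$ is a fibration and so its homotopic and ordinary \v{S}varc genera coincide. Thus the corollary reduces to proving $\hsecat(\Delta_X) = \hsecat(\pi)$, which I would obtain by establishing the two inequalities separately, using the homotopy equivalence $c,\alpha$ and the homotopy-commutativity of the diagram in Figure~\ref{DIAGPROJ}.

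For the inequality $\hsecat(\Delta_X) \leq \hsecat(\pi)$, I would start from an open cover $X\times X = V_0 \cup \dots \cup V_n$ realizing $\hsecat(\pi) = n$, with local homotopic sections $s_j \colon V_j \to PX$ satisfying $\pi \circ s_j \simeq \iota_j$. Composing with the initial-point map yields candidate sections $\alpha \circ s_j \colon V_j \to X$ of $\Delta_X$ over the same cover. To verify that these are homotopic sections I would use that the diagram commutes up to homotopy in the form $\Delta_X \circ \alpha \simeq \pi$ (indeed $\Delta_X(\alpha(\gamma)) = (\gamma(0),\gamma(0))$ is deformed to $(\gamma(0),\gamma(1)) = \pi(\gamma)$ by contracting the path), whence $\Delta_X \circ (\alpha \circ s_j) \simeq \pi \circ s_j \simeq \iota_j$. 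This exhibits a homotopic sectional covering of $\Delta_X$ of the same cardinality, giving the inequality.

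The reverse inequality $\hsecat(\pi) \leq \hsecat(\Delta_X)$ is even more direct, and here I would exploit that $\pi \circ c = \Delta_X$ holds strictly, since $\pi$ applied to the constant path at $x$ returns $(x,x)$. Starting from an open cover with local homotopic sections $t_j \colon V_j \to X$ of $\Delta_X$, the composites $c \circ t_j \colon V_j \to PX$ satisfy $\pi \circ (c \circ t_j) = \Delta_X \circ t_j \simeq \iota_j$, so they are homotopic sections of $\pi$ over the same cover.

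The work here is light rather than deep: the only point needing care is bookkeeping of the homotopies, that is, checking that replacing a section by its composite with $\alpha$ (resp. $c$) preserves the homotopic-section condition. The conceptual content is simply that $\hsecat$ is invariant when the source of the map is replaced by a homotopy-equivalent space through maps commuting (up to homotopy) with the structure maps to the common base $X\times X$; the two inequalities above are precisely the two halves of this invariance applied to the equivalence $X \simeq PX$ of the preceding proposition. I expect no essential obstacle, the only subtlety being to keep the direction of each relation, the homotopy $\pi \simeq \Delta_X \circ \alpha$ and the strict identity $\Delta_X = \pi \circ c$, aligned with the correct inequality.
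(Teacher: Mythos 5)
Your proof is correct and follows exactly the route the paper intends: the paper states this corollary without proof as an immediate consequence of the two preceding propositions, and your argument (definition of $\TC$, the fibration proposition for $\hsecat(\pi)=\secat(\pi)$, and transport of local homotopic sections along $c$ and $\alpha$ using $\pi\circ c=\Delta_X$ strictly and $\Delta_X\circ\alpha\simeq\pi$ via contracting paths) is precisely the bookkeeping the paper leaves implicit. The explicit homotopy $H_t(\gamma)=(\gamma(0),\gamma(t))$ you indicate is the right one, and both inequalities are handled with the correct direction of each structure map.
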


\begin{proposition}\label{EQUIV}Let $U\subset X\times X$ be an open subset. The following conditions are equivalent.
\begin{enumerate}
\item
There is a section $s_U\colon U \to PX$ of the path fibration $\pi$;
\item
the restrictions to $U$ of the projections $p_1,p_2\colon X\times X \to X$ are homotopic maps\label{DOS};
\item
either ${p_1}_{\mid U}$ or ${p_2}_{\mid U}$ is a section (up to homotopy) of the diagonal map $\Delta_X\colon X\to X\times X$.
\end{enumerate}
\end{proposition}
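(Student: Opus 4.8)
The plan is to establish the two equivalences $(1)\Leftrightarrow(2)$ and $(2)\Leftrightarrow(3)$ separately. The first rests on the exponential (adjunction) law for the compact--open topology on $PX$, identifying a continuous family of paths with a homotopy; the second is a formal manipulation of homotopies inside the product $X\times X$.

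For $(1)\Leftrightarrow(2)$, recall that $\pi(\gamma)=(\gamma(0),\gamma(1))$, so a section $s_U\colon U\to PX$ assigns continuously to each pair $(x,y)\in U$ a path $s_U(x,y)$ running from $x$ to $y$. I would first pass from the section to a homotopy by setting
$$H\colon U\times[0,1]\to X,\qquad H\big((x,y),t\big)=s_U(x,y)(t),$$
i.e.\ $H$ is the composite of $s_U\times\id_{[0,1]}$ with the evaluation map $PX\times[0,1]\to X$. The section condition $\pi\circ s_U=\iota_U$ (where $\iota_U\colon U\hookrightarrow X\times X$) forces $H_0={p_1}_{\mid U}$ and $H_1={p_2}_{\mid U}$, so $H$ is a homotopy ${p_1}_{\mid U}\simeq{p_2}_{\mid U}$. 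Conversely, given such a homotopy $H$, its adjoint $\widehat H(x,y)(t)=H\big((x,y),t\big)$ defines a continuous map $U\to PX$ whose endpoint values exhibit it as a section of $\pi$.

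For $(2)\Leftrightarrow(3)$, I would record the identities $\iota_U=({p_1}_{\mid U},{p_2}_{\mid U})$, $\Delta_X\circ{p_1}_{\mid U}=({p_1}_{\mid U},{p_1}_{\mid U})$ and $\Delta_X\circ{p_2}_{\mid U}=({p_2}_{\mid U},{p_2}_{\mid U})$ as maps $U\to X\times X$. Assuming (2), a homotopy $G\colon{p_1}_{\mid U}\simeq{p_2}_{\mid U}$ yields the homotopy $t\mapsto({p_1}_{\mid U},G_t)$ from $\Delta_X\circ{p_1}_{\mid U}$ to $\iota_U$, so ${p_1}_{\mid U}$ is a homotopic section of $\Delta_X$ and (3) holds. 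Conversely, assuming (3), say via a homotopy $F\colon\Delta_X\circ{p_1}_{\mid U}\simeq\iota_U$ in $X\times X$, composing with the second projection $p_2$ produces a homotopy $p_2\circ F\colon{p_1}_{\mid U}\simeq{p_2}_{\mid U}$, which is (2); the case in which it is ${p_2}_{\mid U}$ that is a homotopic section is symmetric, composing with $p_1$ instead.

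The only genuinely delicate point is the continuity bookkeeping in $(1)\Leftrightarrow(2)$: identifying a continuous choice of paths with a homotopy amounts to the exponential law $C(U\times[0,1],X)\cong C(U,PX)$, and one must remember that local compactness of $[0,1]$ is exactly what makes the evaluation $PX\times[0,1]\to X$ continuous, hence guarantees the ``section $\Rightarrow$ homotopy'' direction (the reverse direction, via the tube lemma on the compact factor, is automatic). Everything in $(2)\Leftrightarrow(3)$ is purely formal.
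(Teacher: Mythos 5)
Your proof is correct. One caveat on the comparison: the paper states Proposition~\ref{EQUIV} as standard background and gives no proof of it at all, so there is nothing to match line by line; the closest in-paper argument is the proof of the simplicial analogue, Theorem~\ref{PROPIEDADES}. Your $(2)\Leftrightarrow(3)$ half coincides with that proof almost verbatim, with homotopies in place of contiguity classes: your pairing $t\mapsto({p_1}_{\mid U},G_t)$ is the paper's step $\iota_\Omega=((\pi_1)_{\vert\Omega},(\pi_2)_{\vert\Omega})\sim((\pi_1)_{\vert\Omega},(\pi_1)_{\vert\Omega})=\Delta\circ\sigma$, and your post-composition with $p_1$ or $p_2$ is the paper's $1\Rightarrow 2$ step of composing the chain of contiguous maps with the projections. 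Your $(1)\Leftrightarrow(2)$ half is the genuinely topological content with no simplicial counterpart (avoiding a path object $PK$ is the stated point of the paper), and your continuity bookkeeping there is accurate in both directions: uncurrying a section into a homotopy needs continuity of the evaluation $PX\times[0,1]\to X$, which is where local compactness of $[0,1]$ enters, while currying a homotopy into a map $U\to PX$ with the compact--open topology holds unconditionally (tube lemma on the compact factor). Structurally the paper's simplicial theorem closes a cycle $1\Rightarrow 2\Rightarrow 3\Rightarrow 1$ in which condition (1) is the existence of an \emph{arbitrary} section $\sigma$ of the diagonal (the Farber condition), whereas your condition (1) involves $PX$; your two independent equivalences cover the statement completely, and your symmetric handling of the ``either/or'' in (3) is exactly what the statement requires.
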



\subsection{Simplicial complexes}\
We refer the reader to Kozlov's book \cite{KOZLOV} for a modern survey of simplicial complexes and to Spanier's book \cite{SPANIER}, as well as to our paper \cite{FMV1}, for the classical notions of simplicial maps, simplicial approximation and contiguity.

Let $K$ be a finite abstract simplicial complex.
Let $K^2=\catpro{K}$ be the categorical product as defined in \cite[Definition 4.25]{KOZLOV}.  The set of vertices $V(K^2)$ is $V(K)\times V(K)$, and the simplices of $K^2$ are defined by the rule $\sigma\in K^2$ if and only if $\pi_1(\sigma)$ and $\pi_2(\sigma)$ belong to $K$,
where $\pi_1,\pi_2$ are the projections from $K^2$ into $K$.

Let $\varphi\colon K \to L$ be a simplicial map, and define $\varphi^2=\varphi\,\Pi\,\varphi\colon K^2 \to L^2$ by
$$\varphi^2(v,w)=(\varphi(v),\varphi(w)).$$ A very important property for our purposes is:

\begin{proposition}\label{PRODSIM}
If $\varphi,\psi \colon K \to L$ are simplicial maps in the same contiguity class (denoted by $\varphi\sim\psi$), then  $\varphi^2\sim \psi^2$.
\end{proposition}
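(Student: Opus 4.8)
The plan is to reduce the statement to the case of a single contiguity step and then propagate it along a chain. Recall that $\varphi\sim\psi$ means there is a finite sequence $\varphi=\varphi_0,\varphi_1,\dots,\varphi_n=\psi$ in which consecutive maps $\varphi_i,\varphi_{i+1}$ are contiguous, i.e. $\varphi_i(\tau)\cup\varphi_{i+1}(\tau)\in L$ for every simplex $\tau\in K$. Since $\sim$ is by definition the transitive closure of the contiguity relation, it suffices to prove the single assertion: if $\varphi,\psi\colon K\to L$ are contiguous, then $\varphi^2,\psi^2\colon K^2\to L^2$ are contiguous. Applying this to each consecutive pair $\varphi_i,\varphi_{i+1}$ and concatenating the resulting maps then yields a contiguity chain from $\varphi^2$ to $\psi^2$, hence $\varphi^2\sim\psi^2$.

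Before checking contiguity I would record the elementary fact that $\varphi^2$ is simplicial, since the same computation drives everything. From the formula $\varphi^2(v,w)=(\varphi(v),\varphi(w))$ one reads off the commutation identity $\pi_j\circ\varphi^2=\varphi\circ\pi_j$ for $j=1,2$; thus for any simplex $\sigma\in K^2$ we have $\pi_j(\varphi^2(\sigma))=\varphi(\pi_j(\sigma))$. Because $\pi_j(\sigma)\in K$ and $\varphi$ is simplicial, $\varphi(\pi_j(\sigma))\in L$, and by the defining criterion for the categorical product $\catpro{K}$ (a vertex set is a simplex iff both of its projections are simplices) this shows $\varphi^2(\sigma)\in L^2$.

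The heart of the argument is the single contiguity step. Assume $\varphi(\tau)\cup\psi(\tau)\in L$ for every $\tau\in K$, and fix an arbitrary simplex $\sigma\in K^2$. I must show $\varphi^2(\sigma)\cup\psi^2(\sigma)\in L^2$, and by the product criterion this reduces to checking that both projections of this union are simplices of $L$. Here the key observation is that $\pi_j$ distributes over unions and commutes with the product maps, so that
$$\pi_j\big(\varphi^2(\sigma)\cup\psi^2(\sigma)\big)=\pi_j(\varphi^2(\sigma))\cup\pi_j(\psi^2(\sigma))=\varphi(\pi_j(\sigma))\cup\psi(\pi_j(\sigma)).$$
Since $\pi_j(\sigma)$ is a simplex of $K$, the contiguity hypothesis applied to $\tau=\pi_j(\sigma)$ gives $\varphi(\pi_j(\sigma))\cup\psi(\pi_j(\sigma))\in L$ for $j=1,2$. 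Hence $\varphi^2(\sigma)\cup\psi^2(\sigma)\in L^2$, which means exactly that $\varphi^2$ and $\psi^2$ are contiguous; this is the reduction we needed.

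I do not expect a genuine obstacle beyond bookkeeping: the whole proof rests on the commutation relation $\pi_j\circ\varphi^2=\varphi\circ\pi_j$ together with the fact that simplices of $\catpro{K}$ are detected projection by projection. The one point demanding a little care is to interpret $\varphi^2(\sigma)\cup\psi^2(\sigma)$ as a union of vertex sets inside $V(K)\times V(K)$ before projecting, so that the set-theoretic identities above are literally valid; once this is set up, the contiguity hypothesis feeds directly into the product criterion coordinatewise, and the argument closes.
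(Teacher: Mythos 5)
Your proof is correct and follows essentially the same route as the paper's: reduce to a single contiguity step, then verify the product criterion coordinatewise by noting that $\pi_j(\varphi^2(\sigma)\cup\psi^2(\sigma))=\varphi(\pi_j(\sigma))\cup\psi(\pi_j(\sigma))$ is a simplex of $L$ by the contiguity hypothesis applied to the simplex $\pi_j(\sigma)\in K$. The only difference is presentational --- the paper writes out the union of vertex sets explicitly, whereas you phrase it via the commutation identity $\pi_j\circ\varphi^2=\varphi\circ\pi_j$; the content is identical.
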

\begin{proof}Being in the same contiguity class, $\varphi \sim \psi$, means that there is a sequence of simplicial maps $h_i\colon K \to L$, $i=1,\dots,m$,  such that $h_0=\varphi$, $h_m=\psi$, and the maps $h_i$ and $h_{i+1}$ are contiguous (denoted $h_i\sim_c h_{i+1}$), so we can assume without loss of generality that $\varphi \sim_c \psi$.  By definition it means that for each simplex $\sigma\in K$ the union of vertices $\varphi(\sigma)\cup\psi(\sigma)$ is a simplex of $L$.

Let $\sigma=\{(v_1,w_1),\dots, (v_n,w_n)\}$ be a simplex in $K^2$. By definition, that means that $\pi_1(\sigma)=\{v_1,\dots,v_n\}$ and $\pi_2(\sigma)=\{w_1,\dots,w_n\}$ are simplices of $K$. Then
$$\varphi(\pi_1(\sigma))\cup \psi (\pi_1(\sigma))=\{\varphi(v_1),\dots,\varphi(v_n),\psi(v_1),\dots,\psi(v_n)\}$$ belongs to $L$. Analogously $\varphi(\pi_2(\sigma))\cup \psi (\pi_2(\sigma))\in L$. This is enough to prove that $\varphi^2(\sigma)\cup \psi^2(\sigma)\in L^2$.
\end{proof}

\begin{remark}There is another notion of  simplicial product, the so-called {\em direct product} $K\times K$ where it is necessary to fix an order on $V(K)$. The difference with $\catpro{K}$ is that the geometric realization $\vert K \times K\vert$ is homeomorphic to $\vert K \vert \times \vert K \vert$, while $\vert \catpro{K} \vert$ has only the homotopy type of the latter. However, Proposition \ref{PRODSIM}  would only be true for the direct product  if the maps $\varphi,\psi$ preserve the order.
\end{remark}

\begin{remark}
Recently, Gonz\'alez \cite{JESUSGONZ} introduced a combinatorial version $SC(K)$ of the topological complexity which is based on a simplicial analog of part \eqref{DOS} of Proposition \ref{EQUIV}. However, his notion is based on the direct product $K\times K$ and it seems  not easy to compare it with our notion of simplicial complexity.
\end{remark}

\section{Discrete topological complexity}\label{INVARIANT}
In Section \ref{TOPBACK} we  have explained the reason of the following definitions, which avoid the need of a simplicial version $PK$ of the path space.

\subsection{Farber subcomplexes}
Let $\Omega\subset K^2$ be a simplicial subcomplex of the product $K^2=\catpro{K}$ and let $\iota_\Omega\colon \Omega \subset K^2$ be the inclusion map.

Let $\Delta \colon K \to K^2$ be the diagonal map $\Delta(v)=(v,v)$.

\begin{definition}We say that $\Omega\subset K^2$ is a {\em Farber subcomplex} if there exists a simplicial map $\sigma\colon \Omega\subset K^2 \to K$ such that $\Delta\circ \sigma \sim \iota_\Omega$.
\end{definition}
The map $\sigma$ will be called a {\em local homotopic section} of the diagonal, where ``homotopic''  must be understood in the sense of  belonging to the same contiguity class.

\begin{definition} The {\em discrete  topological complexity} $\TC(K)$ of the simplicial complex $K$ is the least integer $n\geq 0$ such that $K^2$ can be covered by $n+1$ Farber subcomplexes.

In other words, $\TC(K)\leq n$ if and only if $K^2=\Omega_0\cup\cdots\cup \Omega_n$, and there exist simplicial maps $\sigma_j\colon \Omega_j \to K$ such that $\Delta\circ \sigma_j\sim \iota_j$, where $\iota_j\colon \Omega_j\subset K^2$, for $j=0,\dots,n$, are inclusions.
\end{definition}

Sometimes we shall call $\TC(K)$ the {\em simplicial complexity} of $K$ (not to be confused with the notion $SC(K)$ defined by Gonz\'alez in \cite{JESUSGONZ}). Notice that $\TC(K)$ is defined in purely combinatorial terms, involving neither the geometric realization $\geo{K}$ of the complex, nor the notion of topological homotopy, nor that of simplicial approximation.

\subsection{Motion planning}

Farber's  complexity  is a topological invariant introduced to solve
problems in robotics such as motion planning \cite{FARBER2}.
In this section we explain how our notion of  discrete topological complexity is related to the motion planning problem on a simplicial complex.

Let $\Omega\subset K^2$ be a Farber simplicial subcomplex and let $\sigma\colon \Omega  \to K$
be the associated  section (up to contiguity) of the diagonal, that is, such that $\Delta\circ \sigma \sim \iota_\Omega$. Then for each pair of points $x,y\in K$ such that $(x,y)\in \Omega$, the point $\sigma(x,y)$ is an {\em intermediate point} between $x$ and $y$ in the following sense:
consider the sequence of contiguous maps $h_0\sim_c\cdots\sim_c h_j\sim_c\cdots\sim_c h_m$ connecting $\Delta\circ \sigma$ and $\iota_\Omega$. Denote $h_j(x,y)=(x_j,y_j)$. Then $x_m=x$, $y_m=y$ and $x_0=\sigma(x,y)=y_0$. That means that we have a sequence of points
\begin{equation}\label{PATH}
x=x_m,\dots, x_0=\sigma(x,y)=y_0,\dots,y_m=y.
\end{equation}
Moreover, contiguity implies that two consecutive points in the above sequence belong to the same simplex: in fact, since $h_j\sim_c h_{j+1}$, the points $h_j(x,y)=(x_j,y_j)$ and $h_{j+1}(x,y)=(x_{j+1},y_{j+1})$ generate a simplex of $K^2$ (that is, they are either equal or the vertices of an edge). By definition of the product $K^2$, this   means that the points $x_j$ and $x_{j+1}$ (resp. $y_j$ and $y_{j+1}$) generate a simplex of $K$.
Hence the sequence (\ref{PATH}) gives an edge-path on $K$ connecting the points $x$ and $y$.


\subsection{Invariance}
       Recall from \cite{BM} that two simplicial complexes $K,L$ have the same ``strong homotopy type'', $K\sim L$,  if there is  a sequence of elementary strong collapses and expansions connecting them. This is equivalent  to the existence of simplicial maps $\varphi\colon K \to L$ and $\psi\colon L \to K$ such that $\varphi\circ\psi\sim 1_L$ and $\psi\circ\varphi\sim1_K$ (we recall that $\sim$ means ``being in the same contiguity class'').

        \begin{theorem} The discrete topological complexity is an invariant of the strong homotopy type. That is, $K\sim L$ implies  $\TC(K)=\TC(L)$.
        \end{theorem}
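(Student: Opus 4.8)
The plan is to prove the single inequality $\TC(L)\le \TC(K)$ and then obtain equality by exchanging the roles of $K,L$ and of $\varphi,\psi$ in the symmetric characterization $\varphi\circ\psi\sim 1_L$, $\psi\circ\varphi\sim 1_K$ of $K\sim L$. Before starting I would record a few elementary facts. First, contiguity classes are stable under composition: if $f\sim g$ and $h$ is any simplicial map then $h\circ f\sim h\circ g$ and $f\circ h\sim g\circ h$ whenever the composites are defined. This is immediate from the definition, since a simplicial $h$ sends the simplex $f(\sigma)\cup g(\sigma)$ onto the simplex $h(f(\sigma))\cup h(g(\sigma))$, and the right-hand version is checked the same way. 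Second, squaring is functorial, $(\alpha\circ\beta)^2=\alpha^2\circ\beta^2$, and compatible with the diagonal, $\Delta_L\circ\varphi=\varphi^2\circ\Delta_K$; both are verified on vertices. Third, the preimage of a subcomplex under a simplicial map is again a subcomplex.

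Now assume $\TC(K)\le n$, so that $K^2=\Omega_0\cup\cdots\cup\Omega_n$ with simplicial sections $\sigma_j\colon\Omega_j\to K$ satisfying $\Delta_K\circ\sigma_j\sim\iota_{\Omega_j}$. I would transport this cover to $L^2$ along $\psi^2\colon L^2\to K^2$ by setting $\Omega'_j\eqdef(\psi^2)^{-1}(\Omega_j)$. These are subcomplexes of $L^2$, and they cover $L^2$ because $\psi^2$ carries every simplex of $L^2$ into $K^2=\bigcup_j\Omega_j$. On each $\Omega'_j$ the map $\psi^2$ restricts to a simplicial map $\psi^2|_{\Omega'_j}\colon\Omega'_j\to\Omega_j$, so I can define the candidate section $\tau_j\eqdef\varphi\circ\sigma_j\circ(\psi^2|_{\Omega'_j})\colon\Omega'_j\to L$.

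The heart of the argument is checking that $\Delta_L\circ\tau_j\sim\iota_{\Omega'_j}$, which makes each $\Omega'_j$ a Farber subcomplex and hence forces $\TC(L)\le n$. Using $\Delta_L\circ\varphi=\varphi^2\circ\Delta_K$ I rewrite $\Delta_L\circ\tau_j=\varphi^2\circ(\Delta_K\circ\sigma_j)\circ(\psi^2|_{\Omega'_j})$. By hypothesis $\Delta_K\circ\sigma_j\sim\iota_{\Omega_j}$, so the stability fact gives $\Delta_L\circ\tau_j\sim\varphi^2\circ\iota_{\Omega_j}\circ(\psi^2|_{\Omega'_j})=\varphi^2\circ\psi^2\circ\iota_{\Omega'_j}=(\varphi\circ\psi)^2\circ\iota_{\Omega'_j}$, where the first equality is the identity $\iota_{\Omega_j}\circ\psi^2|_{\Omega'_j}=\psi^2\circ\iota_{\Omega'_j}$ and the second is functoriality of squaring. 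Finally, since $\varphi\circ\psi\sim 1_L$, \propref{PRODSIM} upgrades this to $(\varphi\circ\psi)^2\sim 1_{L^2}$, and composing with $\iota_{\Omega'_j}$ yields $\Delta_L\circ\tau_j\sim\iota_{\Omega'_j}$, as wanted. Thus $L^2$ is covered by $n+1$ Farber subcomplexes, giving $\TC(L)\le\TC(K)$, and the reverse inequality follows by symmetry.

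I expect the main obstacle to be bookkeeping rather than conceptual: one must keep the domains and codomains of the restricted maps straight, in particular that $\psi^2$ genuinely restricts to $\Omega'_j\to\Omega_j$ and that $\iota_{\Omega_j}\circ\psi^2|_{\Omega'_j}$ equals $\psi^2\circ\iota_{\Omega'_j}$ on the nose, and one must apply the composition-stability fact only to maps whose composites are literally defined. The single substantive input beyond the definitions is \propref{PRODSIM}, which converts $\varphi\circ\psi\sim 1_L$ into a contiguity equivalence of the squared maps; everything else is formal manipulation of contiguity classes.
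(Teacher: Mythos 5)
Your proposal is correct and is essentially identical to the paper's own proof: both pull back the Farber cover along $\psi^2$, use the same candidate section $\varphi\circ\sigma_j\circ(\psi^2|_{(\psi^2)^{-1}(\Omega_j)})$, exploit $\Delta_L\circ\varphi=\varphi^2\circ\Delta_K$, and invoke \propref{PRODSIM} to get $(\varphi\circ\psi)^2\sim 1_{L^2}$, concluding by symmetry. No gaps; the bookkeeping points you flag are exactly the ones the paper handles implicitly.
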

        \begin{proof}
        From Prop. \ref{PRODSIM} we have
        $$\varphi^2\circ\psi^2=(\varphi\circ \psi)^2\sim (1_L)^2=1_{L^2}$$ and analogously  $\psi^2\circ\varphi^2\sim 1_{K^2}$, so we have $K^2\sim L^2$. Moreover the  diagram
        in Figure \ref{DIAG}
        \begin{figure}[h]
        $\xymatrix{
        &K\ar@<+1ex>[rr]^{\ \varphi\ }\ar[dd]_{\Delta_K}&&\ L\ \ar[dd]^{\Delta_L}\ar@<+1ex>[ll]^{\ \psi\
        }& \\
        &&&&\\
        \Omega\ \ar@/ ^/[ruu]^{\sigma}\ar@{^{(}->}[r]&K^2\ar@<+1ex>[rr]^{\
        {\varphi^2}}&&\ L^2\ \ar@<+1ex>[ll]^{{\psi^2}\ }&\ar@{_{(}->}[l]\
        \ar@{.>}@/ _/[luu]_{\lambda}\Lambda
        }$
        \caption{} \label{DIAG}
        \end{figure}
        verifies $\Delta_L\circ \varphi=\varphi^2\circ \Delta_K$ and $\Delta_k\circ\psi=\psi^2\circ\Delta_L$.

        Now let $\Omega\subset K^2$ be a Farber subcomplex of $K^2$, that is, there exists a simplicial map $\sigma\colon \Omega\to K$ such that $\Delta_K\circ \sigma\sim\iota_\Omega$.
        Then the inverse  image $\Lambda=(\psi^2)^{-1}(\Omega)\subset L^2$ is a Farber subcomplex of $L^2$, because (see Figure \ref{DIAG}) the map
        $$\lambda=\varphi\circ\sigma\circ{\psi^2}_{\vert \Lambda}\colon \Lambda\subset L^2\to L$$ verifies
        \begin{align*}
        \Delta_L\circ \lambda=&\Delta_L\circ \varphi\circ\sigma\circ\psi^2\circ \iota_\Lambda\\
        =&\varphi^2\circ \Delta_K\circ\sigma\circ\psi^2\circ\iota_\Lambda\sim \varphi^2\circ \iota_\Omega\circ \psi^2\circ\iota_\Lambda\\
        =&(\varphi^2\circ\psi^2)_{\vert  \Lambda} \sim 1_{L^2}\circ\iota_\Lambda\\
        =&\iota_\Lambda.
        \end{align*}
        Let $\TC(K)\leq n$, that is, there exists a covering $K=\Omega_0\cup\cdots\Omega_n$ where $\Omega_j$, $j=0,\dots,n$, are Farber subcomplexes. Then  the corresponding $\Lambda_j=(\psi^2)^{-1}(\Omega_j)$, $j=0,\dots,n$, form a Faber covering of $L^2$, hence $\TC(L)\leq n$. The other inequality is proved in the same way.
        \end{proof}

We have the following characterization of Farber subcomplexes, which is the simplicial version of Proposition \ref{EQUIV}.

\begin{theorem}\label{PROPIEDADES}Let $\Omega\subset K^2$ be a subcomplex of the categorical product. The following conditions are equivalent:
\begin{enumerate}
\item
$\Omega$ is a Farber subcomplex.
\item
the restrictions to $\Omega$ of the projections are in the same contiguity class, that is, $(\pi_1)\subom\sim (\pi_2)\subom$.
\item
Either $(\pi_1)\subom$ or $(\pi_2)\subom$ is a section (up to contiguity) of the diagonal $\Delta\colon K\to K^2$.
\end{enumerate}
\end{theorem}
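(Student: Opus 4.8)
The plan is to prove the cyclic chain of implications $(1)\Rightarrow(2)\Rightarrow(3)\Rightarrow(1)$, reducing everything to a few elementary observations about the categorical product. First I would record the identities that make the three conditions directly comparable. Writing a simplicial map into $K^2$ by its two coordinates, the inclusion is $\iota_\Omega=((\pi_1)\subom,(\pi_2)\subom)$, the projections satisfy $\pi_1\circ\Delta=\pi_2\circ\Delta=\id_K$ and $\pi_i\circ\iota_\Omega=(\pi_i)\subom$, and the diagonal composed with a projection is $\Delta\circ(\pi_i)\subom=((\pi_i)\subom,(\pi_i)\subom)$. Throughout I would use the standard fact (recalled in \cite{FMV1}) that a contiguity class is preserved under both pre- and post-composition with a fixed simplicial map; the one-line verification is the same union-of-images computation already carried out in \propref{PRODSIM}.

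The two easy implications I would dispatch first. For $(3)\Rightarrow(1)$ there is essentially nothing to do: if, say, $(\pi_1)\subom$ is a section of $\Delta$ up to contiguity, then $\Delta\circ(\pi_1)\subom\sim\iota_\Omega$, so $\sigma\eqdef(\pi_1)\subom$ witnesses that $\Omega$ is a Farber subcomplex (and symmetrically for $(\pi_2)\subom$). For $(1)\Rightarrow(2)$ I would post-compose the defining relation $\Delta\circ\sigma\sim\iota_\Omega$ with the projections: applying $\pi_1$ and using $\pi_1\circ\Delta=\id_K$ together with $\pi_1\circ\iota_\Omega=(\pi_1)\subom$ gives $\sigma\sim(\pi_1)\subom$, and likewise $\sigma\sim(\pi_2)\subom$; transitivity of the contiguity class then yields $(\pi_1)\subom\sim(\pi_2)\subom$.

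The substantive step is $(2)\Rightarrow(3)$. Using the identities above, the assertion that $(\pi_1)\subom$ is a section of $\Delta$ up to contiguity is exactly the statement $((\pi_1)\subom,(\pi_1)\subom)\sim((\pi_1)\subom,(\pi_2)\subom)$. I would obtain this from a coordinate-wise strengthening of \propref{PRODSIM}: if $f\sim g\colon\Omega\to K$ and $h\colon\Omega\to K$ is any simplicial map, then the paired maps satisfy $(h,f)\sim(h,g)\colon\Omega\to K^2$. For a single contiguity step $f\sim_c g$ this is immediate, since for a simplex $\tau\in\Omega$ the first coordinate of the union of images is $h(\tau)\in K$ while the second is $f(\tau)\cup g(\tau)\in K$, which is precisely the condition for $(h,f)(\tau)\cup(h,g)(\tau)$ to be a simplex of $K^2$; one then iterates along a contiguity sequence. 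Applying this with $h=f=(\pi_1)\subom$ and $g=(\pi_2)\subom$, hypothesis (2) gives the required contiguity and hence condition (3).

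The only place where care is needed is the step $(2)\Rightarrow(3)$, and there the potential obstacle is notational rather than mathematical: one must keep track of which coordinate is held fixed and recognize all three conditions as assertions about explicit pairings into $K^2$. Once the identities $\iota_\Omega=((\pi_1)\subom,(\pi_2)\subom)$ and $\Delta\circ(\pi_i)\subom=((\pi_i)\subom,(\pi_i)\subom)$ are in hand, the argument is the same union-of-images bookkeeping used in \propref{PRODSIM}, so I do not anticipate any genuine difficulty.
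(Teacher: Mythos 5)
Your proposal is correct and follows essentially the same route as the paper: the same cycle $(1)\Rightarrow(2)\Rightarrow(3)\Rightarrow(1)$, with $(1)\Rightarrow(2)$ by post-composing $\Delta\circ\sigma\sim\iota_\Omega$ with the projections and $(2)\Rightarrow(3)$ by comparing the pairings $((\pi_1)\subom,(\pi_1)\subom)$ and $((\pi_1)\subom,(\pi_2)\subom)$. The only difference is that you explicitly isolate and verify the coordinate-wise pairing lemma $(h,f)\sim(h,g)$ whose union-of-images check the paper leaves implicit (it asserts the contiguity ``by hypothesis''), so your write-up is a slightly more detailed version of the same argument.
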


\begin{proof}\item[$1\Rightarrow 2)$] If $\Omega\subset K^2$ is a Farber subcomplex, then there exists $\sigma\colon \Omega\to K$ such that $\Delta\circ \sigma\sim \iota_\Omega$. But $\Delta\circ \sigma$ is the map $(\sigma,\sigma)$ defined by $\omega\in\Omega\mapsto (\sigma(\omega),\sigma(\omega))$. On the other hand $\iota_\Omega=(\pi_1\circ\iota_\Omega,\pi_2\circ\iota_\Omega)$. Then
$$(\sigma,\sigma)\sim (\pi_1\circ\iota_\Omega,\pi_2\circ\iota_\Omega)$$
which implies, by composing with the projections, that
$$(\pi_1)\subom=\pi_1\circ\iota_\Omega\sim \sigma \sim \pi_2\circ\iota_\Omega=(\pi_2)\subom.$$
\item[$2\Rightarrow 3)$] If $(\pi_1)\subom\sim(\pi_2)\subom$, define $\sigma\colon\Omega\to K$ by $\sigma=(\pi_1)\subom$. Then $\iota_\Omega(x,y)=(x,y)$, for $(x,y)\in\Omega$,  while $(\Delta\circ\sigma)(x,y)=(x,x)$. We  have by hypothesis
$$\iota_\Omega=((\pi_1)\subom,(\pi_2)\subom)\sim ((\pi_1)\subom,(\pi_1)\subom)=\Delta\circ\sigma.$$
\item[$3\Rightarrow 1)$] If $\sigma=(\pi_i)\subom$ verifies $\Delta\circ\sigma\sim \iota_\Omega$, then $\Omega$ is a Farber subcomplex, by definition.
\end{proof}


\section{Relationship with simplicial LS-category}\label{CATEGORY}
One of Farber's main  results for topological complexity   relates it to a well known classical invariant, the Lusternik-Schnirelmann category \cite{CLOT}. In this section we get analogous results for the discrete setting, by using the simplicial LS-category of a simplicial complex  introduced by the authors in \cite{FMV1, FMMV2}.

\subsection{Comparison with the category of $K$}

\begin{definition}Let $K$ be an abstract simplicial complex. A subcomplex $L\subset K$ is {\em categorical} if the inclusion $\iota_L\colon L\subset K$ belongs to the contiguity class of some constant map $L\to K$, that is, $\iota_L\sim \ast$. The (normalized) simplicial {\em LS-category} $\scat K$ of the simplicial complex $K$ is the minimum number $m\geq 0$  such that there are categorical subcomplexes $L_0,\dots,L_m$ which cover $K$, that is, $K=L_0\cup\cdots \cup L_m$.
\end{definition}

\begin{remark}
As explained in \cite{FMV1}, a categorical subcomplex  may not be strongly collapsible in itself, but it must be in the ambient complex.  Equivalently,  it is the inclusion $\iota_L$, and not  the identity $1_L$, which belongs to the contiguity class of a constant map.
\end{remark}

The first inequality  proved by Farber directly compares the topological complexity $\TC(X)$ of a space with the LS-category $\cat X$. We shall prove that this result also holds  in the discrete setting.

\begin{theorem}For any  abstract simplicial complex we have
$$\scat K\leq \TC(K).$$
\end{theorem}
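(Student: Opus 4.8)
The plan is to mimic Farber's classical argument that $\cat X\leq\TC(X)$, transporting it to the contiguity setting by means of the characterization of Farber subcomplexes in \thmref{PROPIEDADES}. The classical proof restricts a covering of $X\times X$ to a slice $X\times\{x_0\}$: on each piece the two projections are homotopic, but on the slice one projection is the identity and the other is constant, so each piece becomes categorical. I would reproduce exactly this mechanism with contiguity in place of homotopy.

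Concretely, suppose $\TC(K)\leq n$, so that $K^2=\Omega_0\cup\cdots\cup\Omega_n$ with each $\Omega_j$ a Farber subcomplex. By the equivalence $(1)\Leftrightarrow(2)$ of \thmref{PROPIEDADES}, this means $(\pi_1)_{\vert\Omega_j}\sim(\pi_2)_{\vert\Omega_j}$ for every $j$. Fix a vertex $v_0\in K$ and consider the simplicial map $e\colon K\to K^2$, $e(v)=(v,v_0)$, which embeds $K$ isomorphically onto the slice $K\times\{v_0\}$; indeed $e(\tau)=\tau\times\{v_0\}$ is a simplex of $K^2$ precisely because $\pi_1(e(\tau))=\tau\in K$ and $\pi_2(e(\tau))=\{v_0\}\in K$. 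For each $j$ set $L_j=e^{-1}(\Omega_j)\subseteq K$, a subcomplex since $e$ is simplicial. These cover $K$: given a simplex $\tau\in K$, the simplex $e(\tau)$ lies in some $\Omega_j$ because the $\Omega_j$ cover $K^2$, whence $\tau\in L_j$. To see that each $L_j$ is categorical, I compose the contiguity $(\pi_1)_{\vert\Omega_j}\sim(\pi_2)_{\vert\Omega_j}$ on the right with $e$ restricted to $L_j$ (which lands in $\Omega_j$ by definition of $L_j$). Since $\pi_1\circ e=\id_K$ while $\pi_2\circ e$ is the constant map at $v_0$, restricting to $L_j$ gives
$$\iota_{L_j}=(\pi_1)_{\vert\Omega_j}\circ e\sim(\pi_2)_{\vert\Omega_j}\circ e=\pt.$$
Hence $\iota_{L_j}\sim\pt$, so $L_j$ is categorical, and the $n+1$ categorical subcomplexes $L_0,\dots,L_n$ cover $K$, yielding $\scat K\leq n=\TC(K)$.

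The argument is essentially formal once these ingredients are assembled, so there is no deep obstacle; the two points requiring care are the verification that the slice $K\times\{v_0\}$ is a genuine subcomplex of the categorical product isomorphic to $K$ (which uses the defining rule of $K^2$), and the auxiliary fact that precomposition with a fixed simplicial map respects contiguity classes. The latter is immediate from the definition: if $f(\rho)\cup g(\rho)\in K$ for every simplex $\rho\in\Omega_j$, then for each simplex $\tau\in L_j$ the image $e(\tau)\in\Omega_j$ is a simplex, so $f(e(\tau))\cup g(e(\tau))\in K$; thus $f\sim_c g$ implies $f\circ e\sim_c g\circ e$, and chaining over a contiguity sequence preserves the whole contiguity class.
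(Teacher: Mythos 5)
Your proof is correct and takes essentially the same approach as the paper: both pull the Farber covering of $K^2$ back along a slice embedding at a base vertex (you use $v\mapsto(v,v_0)$, the paper uses $w\mapsto(v_0,w)$) and show each preimage is categorical because one projection restricts to the inclusion and the other to the constant map $v_0$. The only difference is presentational: you invoke the equivalence $(1)\Leftrightarrow(2)$ of \thmref{PROPIEDADES} and precompose with the slice map, while the paper unwinds the contiguity chain $\Delta_K\circ\sigma\sim\iota_\Omega$ directly, which amounts to re-deriving that same equivalence inline.
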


\begin{proof}If $\TC(K)\leq n$, let $K^2=\Omega_0\cup\cdots\cup\Omega_n$ be a covering by Farber subcomplexes. Fix a base point $v_0\in K$ and let $i_0\colon K \to K^2$ be the simplicial map $i_0(w)=(v_0,w)$. Then, let us take the inverse images
$$\Sigma_j =(i_0)^{-1}(\Omega_j)\subset K, \quad j=0,\dots,n.$$
Since $K=\Sigma_0\cup\cdots\cup\Sigma_n$, if we   prove that each $\Sigma_j$ is a categorical subcomplex then we can conclude that $\scat K\leq n$, and the result follows.

Let $\Omega\subset K^2$ be a Farber subcomplex, with a local section $\sigma\colon \Omega \to K$ such that $\Delta_K\circ \sigma\sim \iota_\Omega$, and let $\Sigma=(i_0)^{-1}(\Omega)\subset K$. We shall prove that the inclusion $\iota_\Sigma\colon \Sigma\subset K$ belongs to the contiguity class of the constant map $v_0\colon \Sigma\to K$, so we shall obtain that $\Sigma$ is a categorical subcomplex of $K$.

Since $\Delta_K\circ \sigma \sim \iota_\Omega$, there is a sequence of contiguous maps $\psi_i\colon \Omega \to K^2$, $i=1,\dots,m$, such that
\begin{equation}\label{CHAIN1}
\Delta_K\circ\sigma=\psi_1\sim_c\cdots\sim_c \psi_m=\iota_\Omega.
\end{equation}
Then, by composition,
$$\pi_1\circ\psi_1\circ i_0\circ\iota_\Sigma\sim_c\dots\sim_c\pi_1\circ\psi_m\circ i_0\circ \iota_\Sigma,$$
where, for every $w\in \Sigma$,
$$\pi_1\circ\psi_1\circ i_0\circ \iota_\Sigma(w)=\pi_1\circ\Delta_K\circ\sigma\circ i_0(w)=\sigma(v_0,w),$$
and
$$\pi_1\circ\psi_m\circ i_0\circ \iota_\Sigma(w)=\pi_1\circ\iota_\Omega(v_0,w)=v_0.$$
On the other hand
\begin{equation}\label{CHAIN2}
\pi_2\circ\psi_1\circ i_0\circ\iota_\Sigma\sim_c\dots\sim_c\pi_2\circ\psi_m\circ i_0\circ \iota_\Sigma,
\end{equation}
where, for every $w\in \Sigma$,
$$\pi_2\circ\psi_m\circ i_0\circ\iota_\Sigma(w)=\pi_2\circ \iota_\Omega(v_0,w)=w,$$
and
$$\pi_2\circ\psi_1\circ i_0\circ\iota_\Sigma(w)=\pi_2\circ\Delta_K\circ\sigma\circ i_0(w)=\sigma(v_0,w).$$
From (\ref{CHAIN1}) and (\ref{CHAIN2}) it follows
$$v_0\sim \sigma(v_0,w) \sim w, \quad \forall w\in \Sigma,$$
or equivalently, $v_0\sim \iota_\Sigma$, hence $\Sigma$ is a categorical subcomplex.
\end{proof}


\subsection{Comparison with the category of $K^2$}
The second comparison  result by Farber in \cite{FARBER1}  is between $\TC(X)$ and $\cat (X\times X)$. We shall prove that it is  also true in the discrete setting.

\begin{lemma}\label{CONNECTED}The abstract simplicial complex $K$ is edge-path connected if and only if two arbitrary constant maps $L\to K$ are in the same contiguity class.
\end{lemma}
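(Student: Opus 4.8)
The plan is to prove both implications of this biconditional by unpacking the definition of contiguity class for constant maps. Recall that a constant map $c_v \colon L \to K$ sends every vertex of $L$ to a fixed vertex $v \in V(K)$. Two constant maps $c_v$ and $c_w$ being in the same contiguity class means there is a chain of simplicial maps $c_v = g_0 \sim_c g_1 \sim_c \cdots \sim_c g_r = c_w$, where consecutive maps are contiguous. The key observation is that the essential content is about the relationship between the vertices $v$ and $w$, so I expect to reduce the general statement about arbitrary constant maps to the special case where $L$ is a single vertex (or where we track the image of a single vertex of $L$).

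For the direction asserting that edge-path connectedness implies any two constant maps lie in the same contiguity class, I would first show the base case: if $v$ and $w$ are vertices of $K$ joined by an edge (i.e. $\{v,w\} \in K$), then the constant maps $c_v, c_w \colon L \to K$ are contiguous. This follows directly from the definition of contiguity, since for any simplex $\tau \in L$ we have $c_v(\tau) \cup c_w(\tau) = \{v,w\}$, which is a simplex of $K$ by assumption. Then, given edge-path connectedness, any two vertices $v, w$ are joined by an edge-path $v = u_0, u_1, \dots, u_k = w$ with each $\{u_i, u_{i+1}\} \in K$; composing the contiguities $c_{u_i} \sim_c c_{u_{i+1}}$ yields $c_v \sim c_w$ as a single contiguity class, which is exactly what we want.

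For the converse direction, I would argue by contraposition: suppose $K$ is \emph{not} edge-path connected, so its vertex set splits into at least two distinct edge-path components. Pick vertices $v, w$ in different components and consider the constant maps $c_v, c_w \colon L \to K$. I would show these cannot be in the same contiguity class. The idea is that contiguity can only move the image of a vertex within a single simplex, hence within a single edge-path component: if $g \sim_c g'$ are simplicial maps $L \to K$, then for each vertex $u$ of $L$, the vertices $g(u)$ and $g'(u)$ lie in a common simplex of $K$ and are therefore in the same edge-path component. By induction along any contiguity chain, the component of the image of a fixed vertex is a contiguity invariant, so $c_v$ and $c_w$ landing in different components cannot be connected.

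The main obstacle I anticipate is the converse direction, specifically making rigorous the claim that ``the edge-path component of the image is a contiguity invariant.'' One must be slightly careful because contiguity is defined via the condition that $g(\tau) \cup g'(\tau)$ be a simplex for every simplex $\tau$ of $L$, and one needs $L$ to be nonempty (and ideally to contain at least one vertex) for the argument to say anything. For $L$ a single point this is transparent, and the general case reduces to it by restricting attention to any one vertex of $L$; I would state this reduction explicitly. A minor subtlety worth flagging is that when $L$ is empty the statement is vacuous or degenerate, so I would implicitly assume $L$ is nonempty, consistent with the intended application to categorical subcomplexes.
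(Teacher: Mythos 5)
Your proof is correct. The paper in fact states this lemma without giving any proof, so there is no argument of the authors to compare against; your argument is the standard one and fills the gap completely: contiguity of $c_v$ and $c_w$ when $\{v,w\}$ is an edge of $K$ (since $c_v(\tau)\cup c_w(\tau)=\{v,w\}$ for every $\tau\in L$), chained along an edge-path for one direction, and for the converse the observation that a single contiguity step moves the image $g(u)$ of any fixed vertex $u\in L$ only within a simplex of $K$, hence within one edge-path component, so the component is invariant along any contiguity chain. Your caveat that $L$ must be nonempty is also the right one to flag --- for $L=\emptyset$ the backward implication fails vacuously --- and it is consistent with the paper's intended use, where $L$ ranges over nonempty categorical subcomplexes.
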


The following theorem uses the normalized versions of LS-category and  topological complexity.

\begin{theorem}  If  $K$ is an edge-path connected  complex, then
$$\TC(K) \leq \scat (K^2).$$
\end{theorem}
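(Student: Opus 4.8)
The plan is to mimic Farber's classical argument $\TC(X)\leq\cat(X\times X)$ in the simplicial setting, where the key translation is that ``categorical subcomplex'' means $\iota_L\sim\ast$ and ``Farber subcomplex'' means $(\pi_1)\subom\sim(\pi_2)\subom$ by Theorem~\ref{PROPIEDADES}. First I would take a covering $K^2=L_0\cup\cdots\cup L_n$ of the product by categorical subcomplexes realizing $\scat(K^2)\leq n$, so that for each $j$ the inclusion $\iota_{L_j}\colon L_j\subset K^2$ is in the contiguity class of some constant map, say the constant at a vertex $(a_j,b_j)\in K^2$. My goal is to show each $L_j$ is in fact a Farber subcomplex, which would immediately give $\TC(K)\leq n$.

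The main step is to verify condition (2) of Theorem~\ref{PROPIEDADES} for each $L_j$, namely $(\pi_1)_{\vert L_j}\sim(\pi_2)_{\vert L_j}$. Here I would exploit the projections $\pi_1,\pi_2\colon K^2\to K$. Since $\iota_{L_j}\sim c_{(a_j,b_j)}$ (the constant map), composing with each projection and using that contiguity classes are preserved under composition gives
\begin{align*}
(\pi_1)_{\vert L_j}=\pi_1\circ\iota_{L_j}&\sim \pi_1\circ c_{(a_j,b_j)}=c_{a_j},\\
(\pi_2)_{\vert L_j}=\pi_2\circ\iota_{L_j}&\sim \pi_2\circ c_{(a_j,b_j)}=c_{b_j},
\end{align*}
so each projection, restricted to $L_j$, is contiguous to a \emph{constant} map. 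Thus I am reduced to comparing the two constant maps $c_{a_j}$ and $c_{b_j}$ from $L_j$ to $K$.

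This is exactly where the edge-path connectedness hypothesis enters, and I expect it to be the crux rather than the hard part. By Lemma~\ref{CONNECTED}, since $K$ is edge-path connected, any two constant maps $L_j\to K$ lie in the same contiguity class, so $c_{a_j}\sim c_{b_j}$. Chaining the three relations yields $(\pi_1)_{\vert L_j}\sim c_{a_j}\sim c_{b_j}\sim(\pi_2)_{\vert L_j}$, whence $(\pi_1)_{\vert L_j}\sim(\pi_2)_{\vert L_j}$ and $L_j$ is a Farber subcomplex by Theorem~\ref{PROPIEDADES}. Since the $L_j$ already cover $K^2$, they form a Farber covering of size $n+1$, giving $\TC(K)\leq n=\scat(K^2)$.

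The only subtlety to watch is that a categorical subcomplex of $K^2$ is defined by contiguity of the \emph{inclusion} $\iota_{L_j}$ to a constant map, not of the identity $1_{L_j}$; this is precisely the right hypothesis, since I only ever need to compose the inclusion with the projections. The transitivity and composition-compatibility of the contiguity relation $\sim$ (already used throughout the paper, e.g. in Proposition~\ref{PRODSIM}) make each of these steps routine, so the argument is essentially a clean diagram chase once Lemma~\ref{CONNECTED} supplies the identification of the two constants.
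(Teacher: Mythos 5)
Your proof is correct, and it reaches the conclusion by a genuinely shorter route than the paper's. Both arguments share the same skeleton forced by the statement --- take a categorical covering $K^2=L_0\cup\cdots\cup L_n$ and show each piece is a Farber subcomplex --- but the key step differs. You compose the contiguity $\iota_{L_j}\sim c_{(a_j,b_j)}$ with the projections, use Lemma~\ref{CONNECTED} to identify the two constants $c_{a_j}\sim c_{b_j}$ as maps $L_j\to K$, and then invoke the implication $(2)\Rightarrow(1)$ of Theorem~\ref{PROPIEDADES} as a black box. The paper, by contrast, never cites Theorem~\ref{PROPIEDADES} in this proof: it applies connectedness in $K^2$ rather than in $K$, replacing the constant $(v_0,w_0)$ by a diagonal constant $(v_0,v_0)$, and then works directly with the chain $\iota_\Omega=\varphi_1\sim_c\cdots\sim_c\varphi_m=c_{(v_0,v_0)}$, constructing by hand two interpolating families $\xi_j(v,w)=(v,\pi_1\circ\varphi_j(v,w))$ and $\chi_j(v,w)=(v,\pi_2\circ\varphi_j(v,w))$ to conclude $\Delta_K\circ\sigma=\xi_1\sim\xi_m=\chi_m\sim\chi_1=\iota_\Omega$ with $\sigma=(\pi_1)_{\vert\Omega}$. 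In effect the paper re-derives, in this special case, the product-contiguity bookkeeping that the proof of Theorem~\ref{PROPIEDADES} already encapsulates, whereas your version outsources it; what you need beyond that lemma and theorem is only transitivity of $\sim$ and its stability under post-composition with a fixed simplicial map, both standard and used elsewhere in the paper (e.g.\ in the proof that $\scat K\leq\TC(K)$). Your approach buys modularity and brevity; the paper's buys a self-contained, explicit chain of contiguous maps, which also makes visible how the homotopic section is realized combinatorially. One small remark: Lemma~\ref{CONNECTED} as stated concerns constant maps into $K$, while the paper's adjustment of $(v_0,w_0)$ to $(v_0,v_0)$ implicitly uses edge-path connectedness of $K^2$ (which does follow from that of $K$); your use of the lemma is the more literal one.
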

\begin{proof}Let $\scat(K\,\Pi\, K)=n$ and let $K^2=\Omega_0\cup\cdots\Omega_n$ be a categorical covering of $K^2$. If we are able to prove that each $\Omega=\Omega_j$, $j=0,\dots,n$,  is a Farber subcomplex then we will have $\TC(K)\leq n$, thus proving the Theorem.

By definition the inclusion $\iota_\Omega\colon \Omega\subset K^2$ verifies $\iota_\Omega\sim \ast$, where $\ast\colon \Omega \to K^2$ is some constant map $(v_0,w_0)$. Since the complex is path-connected we can choose the point $\ast$ verifying  $w_0=v_0$.

By definition of contiguity class, since $\iota_\Omega\sim \ast$,  there is a sequence of simplicial maps, each one contiguous to the next one,
$$\iota_\Omega=\varphi_1\sim_c\cdots\sim_c \varphi_m=(v_0,v_0),$$ with $\varphi_j\colon \Omega \to K^2$. Let $\pi_1\colon K^2 \to K$ the projection onto the second factor, then each $\pi_1\circ \varphi_j\colon \Omega \to K$ is contiguous to $\pi_1\circ \varphi_{j+1}$. Hence
\begin{equation}\label{PROV1}
\pi_1\circ\iota_\Omega \sim \pi_1\circ \varphi_m=v_0.
\end{equation}

Analogously, let $\pi_2\colon K^2 \to K$ be the projection onto the first factor,  then
\begin{equation}\label{PROV2}
\pi_2\circ\iota_\Omega \sim \pi_2\circ \varphi_m=v_0.
\end{equation}
by means of the sequence $\pi_2\circ\varphi_j$.

Now, we shall verify that the map $\sigma=(\pi_1)_{\vert\Omega}\colon \Omega \to K$ verifies
$\Delta_K\circ \sigma\sim \iota_\Omega$, so we conclude the proof.

Define the maps $\xi_j\colon \Omega \to K^2$, $j=1,\dots,m$, as
$$\xi_j(v,w)=(v,\pi_1\circ \varphi_j(v,w)).$$
These are simplicial maps. Moreover, it is clear that $\xi_1\sim \cdots \sim \xi_m$.

Analogously define $\chi_j\colon \Omega \to K^2$, $j=1,\dots,m$, as
$$\chi_j(v,w)=(v,\pi_2\circ \varphi_j(v,w)).$$
They verify  $\chi_1\sim\cdots\sim\chi_m$.

Then it is immediate to check that:
\begin{enumerate}
\item[i)]
 $\xi_1(v,w)=(v,v)$, that is, $\xi_1=\Delta_K\circ \sigma$;
\item[ii)]
 $\xi_m(v,w)=(v,v_0)$;
\item[iii)]
  $\chi_1(v,w)=(v,w)$, that is $\chi_1=\iota_\Omega$.
\item[iv)]
 $\chi_m(v,w)=(v,v_0)$.
\end{enumerate}
 Then, finally we get:
 $$\Delta_K\circ \sigma=\xi_1\sim \xi_m=\chi_m\sim \chi_1=\iota_\Omega. \qedhere$$
\end{proof}

\begin{corollary}The abstract simplicial complex $K$ is strongly collapsible if and only if  $\TC(K)=0$.
\end{corollary}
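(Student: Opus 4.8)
The plan is to reduce the statement to the characterization of Farber subcomplexes in \thmref{PROPIEDADES}. Observe first that $\TC(K)=0$ means precisely that $K^2$ is covered by a single Farber subcomplex, i.e.\ that $\Omega=K^2$ is itself a Farber subcomplex. By \thmref{PROPIEDADES} this is equivalent to $\pi_1\sim\pi_2$, the two projections $K^2\to K$ lying in the same contiguity class. On the other hand, recall from \cite{BM} that $K$ is strongly collapsible if and only if it has the strong homotopy type of a point, which amounts to the identity $1_K$ lying in the contiguity class of a constant map, $1_K\sim\ast$. Thus the whole corollary reduces to the equivalence $\pi_1\sim\pi_2 \iff 1_K\sim\ast$, and the tool used throughout is that contiguity classes are preserved under composition with simplicial maps, so that $\sim$ behaves as a congruence.

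For the implication $1_K\sim\ast\Rightarrow\pi_1\sim\pi_2$, write $\ast$ for the constant map of value a vertex $v_0$, and note $\pi_i=1_K\circ\pi_i$. Composing the relation $1_K\sim\ast$ with $\pi_i$ gives $\pi_i\sim\ast\circ\pi_i$, and $\ast\circ\pi_i$ is the constant map $K^2\to K$ of value $v_0$ for both $i=1,2$; hence $\pi_1\sim\pi_2$. For the converse $\pi_1\sim\pi_2\Rightarrow 1_K\sim\ast$, fix $v_0$ and consider the simplicial map $i_0\colon K\to K^2$, $i_0(w)=(v_0,w)$, already used earlier. Composing the relation $\pi_1\sim\pi_2$ with $i_0$ yields $\pi_1\circ i_0\sim \pi_2\circ i_0$; but $\pi_1\circ i_0$ is the constant map $v_0$ while $\pi_2\circ i_0=1_K$. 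Therefore $1_K\sim\ast$, i.e.\ $K$ is strongly collapsible, closing the argument.

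A second, equivalent route is worth recording in case the corollary is meant to follow from the comparison results. The forward direction follows at once from the fact that $\TC$ is an invariant of the strong homotopy type: a strongly collapsible $K$ satisfies $K\sim\ast$ and $\TC(\ast)=0$, whence $\TC(K)=0$. The backward direction follows from the inequality $\scat K\le\TC(K)$, since $\TC(K)=0$ forces $\scat K=0$, and $\scat K=0$ means $K$ is itself the single categorical subcomplex in its cover, i.e.\ $\iota_K=1_K\sim\ast$, which is strong collapsibility by \cite{BM}.

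I expect no serious obstacle here, as this is essentially a repackaging of \thmref{PROPIEDADES} with a base-point trick. The only points requiring genuine care are the correct identification of \emph{strongly collapsible} with the condition $1_K\sim\ast$ (via \cite{BM}), and the systematic use of the fact that pre- and post-composition respect contiguity classes; both are routine once made explicit, and I would state them at the outset so that each of the two one-line implications goes through transparently.
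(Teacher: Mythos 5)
Your proposal is correct, and your primary argument takes a genuinely different route from the paper's. The paper derives the corollary from its two comparison theorems: the converse direction from $\scat K\leq \TC(K)$ (exactly as in your second route), and the forward direction from $\TC(K)\leq \scat(K^2)$ combined with the product inequality $\scat K^2+1\leq (\scat K+1)^2$ of \cite{FMMV2} (equivalently, that the categorical product of strongly collapsible complexes is strongly collapsible), using that strong collapsibility is equivalent to $\scat K=0$. Your first route instead reduces everything to \thmref{PROPIEDADES}: $\TC(K)=0$ iff $K^2$ itself is a Farber subcomplex iff $\pi_1\sim\pi_2$, and the equivalence of this with $1_K\sim\ast$ follows purely from the fact that contiguity classes are a congruence under composition --- post-composing $1_K\sim\ast$ with $\pi_i$ in one direction, pre-composing $\pi_1\sim\pi_2$ with the simplicial map $i_0(w)=(v_0,w)$ in the other (note $\pi_1\circ i_0$ is constant and $\pi_2\circ i_0=1_K$, as you say). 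This is more elementary and self-contained: it uses neither the LS-category comparison theorems nor the external result from \cite{FMMV2}, and it also sidesteps the edge-path connectedness hypothesis required by the theorem $\TC(K)\leq \scat(K^2)$, a point the paper leaves implicit (connectedness is automatic for strongly collapsible complexes, but is never mentioned). Your alternative forward direction via strong homotopy invariance of $\TC$ and $\TC(\ast)=0$ is likewise valid and arguably cleaner than the paper's appeal to the product inequality. All the ingredients you invoke --- the Barmak--Minian identification of strong collapsibility with $1_K\sim\ast$, the simpliciality of $i_0$, and preservation of contiguity under pre- and post-composition --- are established or used elsewhere in the paper, so there is no gap.
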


\begin{proof}  By definition, $K$ being strongly collapsible is equivalent to $\scat K=0$. Moreover, in \cite{FMMV2} we proved that $\scat K^2+1\leq (\scat K+1)^2$ (in fact, the categorical product of strongly collapsible complexes is strongly collapsible). Then $\TC(K)=0$. The converse is immediate from the inequality $\TC(K)\geq \scat K$.
\end{proof}
\begin{corollary} The diagonal $\Delta\colon K \to K^2$ admits a {\em global} homotopic section (in the sense of contiguity, that is, there exists $\sigma\colon K^2 \to K$ such that $\Delta_K\circ \sigma \sim 1_K$) if and only if the complex $K$ is strongly collapsible.
\end{corollary}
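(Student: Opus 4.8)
The plan is to deduce this statement directly from the preceding corollary, which identifies strong collapsibility of $K$ with the vanishing $\TC(K)=0$. The crucial remark is that a \emph{global} homotopic section of the diagonal is nothing but the assertion that the whole product $K^2$ is a single Farber subcomplex of itself, so that the corollary reduces to a matching of definitions rather than to any new construction.

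First I would unwind what $\TC(K)=0$ means. By definition, $\TC(K)\le 0$ says that $K^2$ admits a cover by exactly one Farber subcomplex $\Omega_0$; since a cover with a single member forces $\Omega_0=K^2$, this is the same as saying that $K^2$ is a Farber subcomplex of itself. Next I would read off the definition of ``Farber subcomplex'' in the case $\Omega=K^2$. Here the inclusion $\iota_\Omega\colon\Omega\subset K^2$ is just the identity $1_{K^2}$, so $K^2$ being Farber means precisely that there exists a simplicial map $\sigma\colon K^2\to K$ with $\Delta_K\circ\sigma\sim 1_{K^2}$ — that is, a global homotopic section of $\Delta_K$ in the sense of contiguity. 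Combining the two observations yields the equivalence: a global homotopic section exists if and only if $\TC(K)=0$.

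Finally I would invoke the previous corollary, $\TC(K)=0$ if and only if $K$ is strongly collapsible, to close the chain of equivalences. I do not expect any genuine obstacle here, since the argument is purely a matter of unwinding definitions; the only point requiring a word of care is that the trivial cover $\{K^2\}$ is the configuration realizing $\TC(K)=0$ and that its inclusion map is the identity $1_{K^2}$, so that the local section on $\Omega=K^2$ is automatically the sought global one.
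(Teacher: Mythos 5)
Your proof is correct and matches the paper's intended argument: the corollary appears without proof immediately after the equivalence between $\TC(K)=0$ and strong collapsibility, precisely because it follows by the definitional unwinding you give (a one-member Farber cover forces $\Omega_0=K^2$, whose inclusion is $1_{K^2}$, so being Farber is exactly admitting a global homotopic section). You also correctly read the condition written in the statement as $\Delta_K\circ\sigma\sim 1_{K^2}$, silently fixing the paper's typo $1_K$.
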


\begin{example}Consider the complex $K=\partial \Delta^2$ given by the simplices
$$K=\{\emptyset, \{a\},\{b\},\{c\}, \{b,c\}, \{a,c\}, \{a,b\}\},$$
whose geometric realization is represented in Figure \ref{TRIANGLE}.
\begin{figure}[h]
\includegraphics[height=20mm]{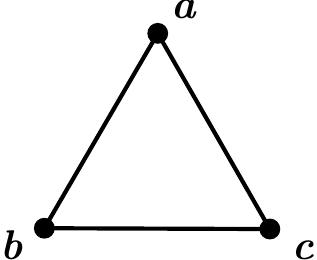}
\caption{} \label{TRIANGLE}
\end{figure}

Since $K$ is not strongly collapsible, but can be covered by two strongly collapsible subcomplexes, it follows that $\scat K=1$. Moreover $\scat K^2+1\leq (\scat K+1)^2=4$ \cite{FMMV2}, hence $1\leq \TC(K)\leq 3$. Then a section $\sigma$ defined in the whole complex $K^2$ is not possible.

It is easy to find three Farber subcomplexes covering $K^2$, and we shall prove now that two are not enough. Then $\TC(K)=2$. In fact, suppose that $K^2=\Omega_1\cup\Omega_2$ is a covering by two subcomplexes. Since $K^2$ has nine maximal simplices (see Figure \ref{BIGPROD}) then one of the subcomplexes, say $\Omega_1$, contains at least five of them.
Now there are nine horizontal edges, so two of the maximal simplices in $\Omega_1$, say $\tau_1$ and $\tau_2$, must have one common horizontal edge. Finally, for each vertex $v_0\in K$, let $i_0\colon K \to K$ be the map $i_0(v)=(v_0,v)$. From Proposition \ref{EQUIV}, that $\Omega_1$ is a Farber subcomplex implies that  the subcomplex
$$(i_0)\inv(\Omega_1) = (\{v_0\}\times K) \cap \Omega_1 \subset K$$
is categorical in $K$, in particular it is not $K$ (because $K$ is not strongly collapsible). That means that $\Omega_1$ can not contain three consecutive vertical edges. Then none of the maximal simplices $P,Q,R$ in Figure \ref{BIGPROD} can be contained in $\Omega_1$. But $\Omega_2$ is also a Farber subcomplex, so it can not contain them as well, because by using the map $i_1(v)=(v,v_0)$ one proves that $\Omega_2$ can not contain three consecutive horizontal edges.

\begin{figure}[h]
\includegraphics[height=45mm]{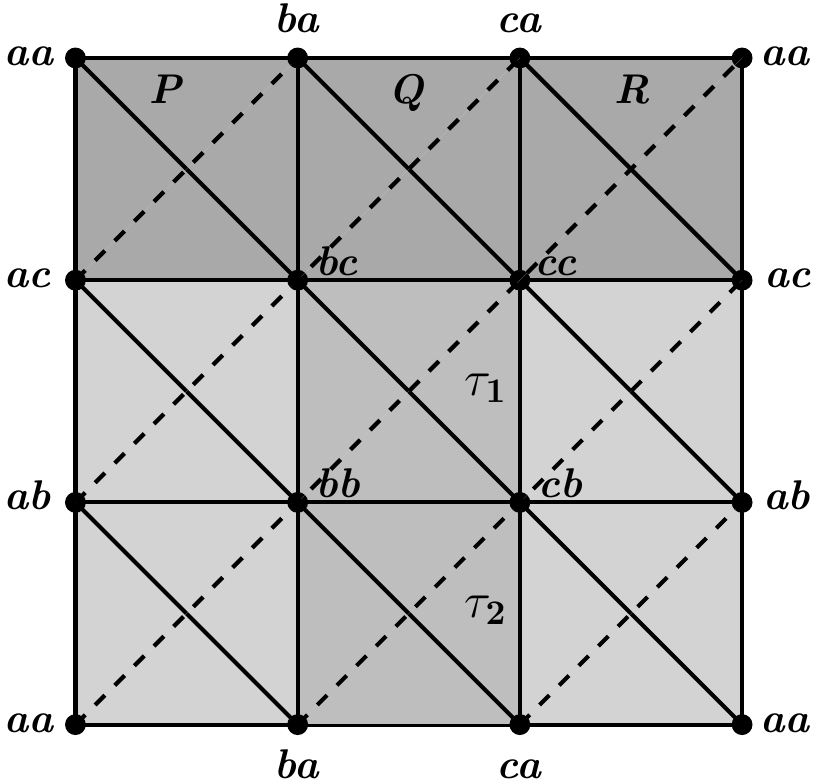}
\caption{} \label{BIGPROD}
\end{figure}
\end{example}


\section{Geometric realization}\label{GEOM}
Let $\geo{K}$ be the geometric realization of the simplicial complex $K$. We can compute the usual topological complexity $\TC(\geo{K})$ of the topological space $\geo{K}$ and to compare it with the discrete (simplicial) complexity  $\TC(K)$ of the simplicial complex $K$.

We need a previous result. It is known that $\geo{K^2}$ is not homeomorphic to the topological product $\geo{K}\times \geo{K}$, but they have the same homotopy type, as proved in Kozlov \cite[Prop.15.23]{KOZLOV}. The proof is based in the so-called ``nerve theorem''. However we need an explicit formula, to guarantee the following lemma.

\begin{lemma}There exists a homotopy equivalence  $u\colon \geo{K}\times\geo{K}\to \geo{K^2}$ satisfying that the projections $p_1,p_2\colon \vert K \vert \times \vert K \vert \to \vert K \vert$ and $\pi_1,\pi_2 \colon \catpro{K} \to K$ verify (up to homotopy) that $\vert \pi_i\vert \circ u =p_i$, for $i=1,2$ (see Figure \ref{KOZLOVTRUE}).\end{lemma}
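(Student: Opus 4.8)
The plan is to write down the obvious ``tensor of barycentric coordinates'' map and verify its properties by direct computation, thereby replacing the nerve-theoretic argument of Kozlov by an explicit formula. Given $x=\sum_i t_i v_i\in\vert K\vert$ and $y=\sum_j s_j w_j\in\vert K\vert$, supported on simplices $\{v_i\},\{w_j\}\in K$, I would set
\[
u(x,y)=\sum_{i,j} t_i s_j\,(v_i,w_j).
\]
The first thing to check is that this is well defined: the coefficients $t_i s_j$ are non-negative and sum to $\bigl(\sum_i t_i\bigr)\bigl(\sum_j s_j\bigr)=1$, and their support $\{(v_i,w_j)\}$ is a simplex of $\catpro{K}$ because its two projections $\{v_i\}$ and $\{w_j\}$ are simplices of $K$. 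Continuity of $u$ follows from the gluing lemma: on each product $\vert\tau\vert\times\vert\rho\vert$ of closed simplices the formula is the standard continuous ``product'' map into the single simplex spanned by all pairs of $\tau$ and $\rho$, and these restrictions agree on overlaps since they depend only on barycentric coordinates.

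Next I would record the compatibility with the projections, which is immediate and in fact holds on the nose rather than merely up to homotopy: since $\vert\pi_1\vert$ and $\vert\pi_2\vert$ are induced by the simplicial projections,
\[
\vert\pi_1\vert\bigl(u(x,y)\bigr)=\sum_{i,j}t_i s_j\,v_i=\Bigl(\sum_j s_j\Bigr)\sum_i t_i v_i=x,
\]
and symmetrically $\vert\pi_2\vert(u(x,y))=y$. Thus $\vert\pi_i\vert\circ u=p_i$, which is the stated property, and moreover the map $w\eqdef(\vert\pi_1\vert,\vert\pi_2\vert)\colon\vert\catpro{K}\vert\to\vert K\vert\times\vert K\vert$ satisfies $w\circ u=\id$ exactly. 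Hence $u$ is a section of $w$, and it remains only to produce a homotopy $u\circ w\simeq\id_{\vert\catpro{K}\vert}$; then $u$ and $w$ are mutually inverse homotopy equivalences.

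For that last step I would use a straight-line homotopy controlled by a convex carrier. For a point $p$ of $\vert\catpro{K}\vert$ with carrier (minimal supporting simplex) $S$, let $\hat S=\pi_1(S)\times\pi_2(S)$ be the ``block'' consisting of all pairs $(v,w)$ with $v\in\pi_1(S)$, $w\in\pi_2(S)$; this is again a simplex of $\catpro{K}$, since its projections are $\pi_1(S),\pi_2(S)\in K$, and $S\subseteq\hat S$. A short computation shows that $u(w(p))$ is also supported in $\hat S$: writing $p=\sum c_{(v,w)}(v,w)$ with all $c_{(v,w)}>0$, one has $w(p)=\bigl(\sum_v a_v v,\sum_w b_w w\bigr)$ with $a_v=\sum_w c_{(v,w)}$, $b_w=\sum_v c_{(v,w)}$, and $a_v>0$ (resp.\ $b_w>0$) exactly for $v\in\pi_1(S)$ (resp.\ $w\in\pi_2(S)$). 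Hence both $p$ and $u(w(p))$ lie in the convex set $\vert\hat S\vert\subseteq\vert\catpro{K}\vert$, so the segment $H(p,t)=(1-t)p+t\,u(w(p))$ stays inside $\vert\catpro{K}\vert$; being affine in the ambient coordinates of the realization, $H$ is automatically continuous and yields $u\circ w\simeq\id$.

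I expect the only delicate point to be this last homotopy, specifically the claim that the straight-line interpolation never leaves $\vert\catpro{K}\vert$ — this would fail for a naive linear homotopy and is exactly what the block observation rescues, since both endpoints lie in the single simplex $\vert\pi_1(S)\times\pi_2(S)\vert$. Granting image containment, continuity of $H$ is free from the affine structure of the geometric realization, the monotonicity of the carrier $S\mapsto\pi_1(S)\times\pi_2(S)$ keeping the construction consistent as $p$ ranges over the complex. Everything else reduces to the direct coordinate computations above.
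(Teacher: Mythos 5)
Your proof is correct, but it takes a genuinely different route from the paper's. The paper factors $u$ through the ordered direct product: it composes the inverse of the homeomorphism $\vert K\times K\vert\cong\vert K\vert\times\vert K\vert$ induced by the projections (Hatcher, p.~538) with the geometric realization of the simplicial map $K\times K\to K\,\Pi\,K$ coming from the inclusions $\sigma_1\times\sigma_2\to\sigma_1\,\Pi\,\sigma_2$, and it outsources the fact that this realization is a homotopy equivalence to Kozlov's Prop.~15.23 and Hatcher's Prop.~4G.2; compatibility with the projections then holds because both constituent maps commute with them. You instead write down the ``Segre-type'' map $u(x,y)=\sum_{i,j}t_is_j\,(v_i,w_j)$ directly on $\vert K\vert\times\vert K\vert$ and prove everything from scratch: your well-definedness check is exactly the defining condition of the categorical product (both projections of the support are simplices of $K$), your computation that $w\circ u=\id$ for $w=(\vert\pi_1\vert,\vert\pi_2\vert)$ is right, and the one delicate point --- that the straight-line homotopy $H(p,t)=(1-t)p+t\,u(w(p))$ never leaves $\vert K\,\Pi\,K\vert$ --- is correctly rescued by your block observation: if $S$ is the carrier of $p$, then $\pi_1(S)\times\pi_2(S)$ is again a simplex of the categorical product (its projections are $\pi_1(S),\pi_2(S)\in K$) that contains the supports of both endpoints, so convexity of a closed geometric simplex finishes the containment; and since the formula for $H$ never references $S$ itself, the jumping of carriers across faces causes no continuity problem. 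What your route buys: it is fully self-contained (no nerve theorem, and no vertex ordering, which the direct product $K\times K$ requires), and it delivers more than the lemma asks for --- $\vert\pi_i\vert\circ u=p_i$ holds strictly rather than up to homotopy, with $u$ a strict section of $w$ and $u\circ w\simeq\id$ by an explicit homotopy. What the paper's route buys: brevity, and the structural fact that the equivalence $\vert K\times K\vert\simeq\vert K\,\Pi\,K\vert$ is the realization of a simplicial map, which is the form in which it is quoted from the literature.
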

\begin{proof}There is a homeomorphism $\vert K \times K \vert =\vert K \vert \times \vert K \vert$ which is induced by the projections \cite[p.~538]{HATCHER}. On the other hand, the homotopy equivalence $\vert K\times K \vert \simeq \vert \catpro{K}\vert$
is the geometric realization of the simplicial map $K\times K \to \catpro{K}$ induced by the natural inclusion map $\sigma_1\times \sigma_2 \to \sigma_1\,\Pi\, \sigma_2$ for each pair of simplices $\sigma_1,\sigma_2\in K$ (see \cite[Prop.~15.23]{KOZLOV} and \cite[Prop.4G.2]{HATCHER}).
\begin{figure}[h]
$\xymatrix{
\geo{K}\times\geo{K}\ar@<-2ex>[rd]_{p_i}\ar@<+1ex>[r]^{\ \ u}
&\ar@<+1ex>[l]^{\ \ v}\ \geo{K^2}\ \ar[d]^{\geo{\pi_i}}\\
&\ \geo{K}\ \\
}$
\caption{} \label{KOZLOVTRUE}
\end{figure}
\end{proof}

\begin{theorem}\label{GEOMREALIZ}$\TC(\geo{K})\leq \TC(K)$.
\end{theorem}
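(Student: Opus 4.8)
The plan is to reduce everything to the homotopy-theoretic descriptions already established and then transport a Farber covering of $K^2$ across the homotopy equivalence $u$ of the preceding lemma. By \corref{SAMEHOM} we have $\TC(\geo{K})=\hsecat(\pi)$, and by \propref{EQUIV} an open set $U\subset\geo{K}\times\geo{K}$ carries a local homotopic section of the path fibration exactly when the two projections $p_1,p_2\colon\geo{K}\times\geo{K}\to\geo{K}$ become homotopic after restriction to $U$. Thus it suffices to manufacture, out of a covering of $K^2$ by $n+1$ Farber subcomplexes, an open covering $V_0,\dots,V_n$ of $\geo{K}\times\geo{K}$ with $(p_1)\vert_{V_j}\simeq(p_2)\vert_{V_j}$ for each $j$; this will give $\TC(\geo{K})\le n$ whenever $\TC(K)\le n$.

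First I would assume $\TC(K)\le n$ and write $K^2=\Omega_0\cup\dots\cup\Omega_n$ with each $\Omega_j$ a Farber subcomplex. By \thmref{PROPIEDADES} this is equivalent to $(\pi_1)_{\vert\Omega_j}\sim(\pi_2)_{\vert\Omega_j}$, i.e.\ the restricted projections lie in the same contiguity class. Since contiguous simplicial maps have homotopic geometric realizations, the realizations $\geo{(\pi_1)_{\vert\Omega_j}}$ and $\geo{(\pi_2)_{\vert\Omega_j}}$ are homotopic maps $\geo{\Omega_j}\to\geo{K}$. I then set $W_j=u^{-1}(\geo{\Omega_j})$. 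Because the $\geo{\Omega_j}$ cover $\geo{K^2}$ and $u$ is continuous, the $W_j$ form a closed covering of $\geo{K}\times\geo{K}$ with $u(W_j)\subset\geo{\Omega_j}$. Restricting the defining relation $\geo{\pi_i}\circ u\simeq p_i$ of the lemma to $W_j$ gives, for $i=1,2$,
$$(p_i)\vert_{W_j}\simeq \geo{\pi_i}\circ (u\vert_{W_j})=\geo{(\pi_i)_{\vert\Omega_j}}\circ (u\vert_{W_j}),$$
and composing the homotopy $\geo{(\pi_1)_{\vert\Omega_j}}\simeq\geo{(\pi_2)_{\vert\Omega_j}}$ with $u\vert_{W_j}$ yields $(p_1)\vert_{W_j}\simeq(p_2)\vert_{W_j}$.

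The remaining, and genuinely delicate, point is the passage from the closed cover $\{W_j\}$ to an open one, since \propref{EQUIV} and the fibrational description of $\hsecat$ are phrased for open sets; this is the step I expect to be the main obstacle. Here I would use that $\geo{K}$ is a compact polyhedron, so $\geo{K}\times\geo{K}$ is one as well, and that each $W_j$, being the preimage under the piecewise linear map $u$ of a subcomplex, is a closed subpolyhedron, hence a neighbourhood deformation retract. Thus there is an open $V_j\supset W_j$ with a retraction $r_j\colon V_j\to W_j$ and a homotopy $\id_{V_j}\simeq\iota\circ r_j$; pushing the projections along it gives $(p_i)\vert_{V_j}\simeq (p_i)\vert_{W_j}\circ r_j$, so that $(p_1)\vert_{V_j}\simeq (p_1)\vert_{W_j}\circ r_j\simeq (p_2)\vert_{W_j}\circ r_j\simeq (p_2)\vert_{V_j}$. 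The $V_j$ then form the required open covering and $\TC(\geo{K})\le n$. One must be careful to justify that the $W_j$ are well-behaved neighbourhood retracts (polyhedral, or at least of ANR type); alternatively, this thickening can be bypassed by invoking the fact recalled in \secref{TOPBACK} that for geometric realizations of finite complexes $\TC$ may be computed with \emph{closed} covers, applying the closed analogue of \propref{EQUIV} directly to the $W_j$.
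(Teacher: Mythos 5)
Your proposal is correct and follows essentially the same route as the paper: pull the Farber cover back along $u$, use Theorem~\ref{PROPIEDADES} plus the fact that contiguous maps realize to homotopic maps, and obtain a closed cover of $\geo{K}\times\geo{K}$ on which $p_1$ and $p_2$ are homotopic. The paper concludes directly via what you list as the alternative---the fact recalled in \S\ref{TOPBACK} that $\TC$ of a finite polyhedron can be computed with closed covers---so your neighbourhood-retract thickening is a harmless extra justification rather than a different argument.
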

\begin{proof}
Let $\TC(K)\leq n$ and  let $K^2=\Omega_0\cup\cdots\cup\Omega_n$ be a Farber covering.

 Let $\Omega$ one of the Farber subcomplexes $\Omega_j$ of the covering of $K^2$, and let $i_\Omega \subset K^2$ be the inclusion. By construction of the geometric realization we have that $\geo{i_\Omega}$ is the inclusion $i_{\geo{\Omega}}\colon \geo{\Omega}\subset \geo{K^2}$. By hypothesis, the maps $\pi_1\circ i_\Omega$ and $\pi_2\circ i_\Omega$ are in the same contiguity class (Proposition \ref{PROPIEDADES}).
By applying the functor $\geo{\cdot}$ of geometric realization, and taking into account that contiguous maps induce homotopic continuous maps (see \cite{SPANIER}), we have
that $\vert \pi_1\vert \circ i_{\vert \Omega\vert}=\vert \pi_1\circ i_\Omega\vert$ is homotopic to $\vert \pi_2\vert \circ i_{\vert \Omega\vert}$.

Consider the closed subspace $F=u^{-1}(\geo{\Omega})\subset \geo{K}\times \geo{K}$. Then the map
$$p_1\circ i_F =\vert \pi_1 \vert\circ u\circ i_F= \vert\pi_1\vert\circ i_{\vert \Omega \vert}$$
is homotopic to $p_2\circ i_F$.
Consider the closed covering
$F_0\cup\cdots\cup F_n$ of $\vert K \vert \times \vert K \vert$.  This implies $\TC(\geo{K})\leq n$.
\end{proof}

\begin{remark}
Notice that the inequality in the latter Theorem is still true for all subdivisions of $K$, because the geometric realizations are homeomorphic, $\geo{\sd K}\cong\geo{K}$. It may happen that $\TC(K)$ differs from $\TC(\sd K)$,  which reflects some particular property of the combinatorial structure.
\end{remark}


\bigskip

\small{

\address{
\noindent {\sc D.~Fern\'andez-Ternero}.
\\Dpto. de Geometr\'{\i}a y Topolog\'{\i}a, Universidad de Sevilla, Spain.\\}
\email{desamfer@us.es}

\medskip

\address{
\noindent {\sc E.~Mac\'ias-Virg\'os}.
\\{Dpto. de Matem\'aticas,} Universidade de San\-tia\-go de Compostela, Spain.\\}
\email{quique.macias@usc.es}

\medskip

\address{
\noindent {\sc E.~Minuz}.
\\Department of Mathematics, Aarhus University, Denmark\\}
\email{minuz@math.au.dk}

\medskip

\address{
\noindent {\sc J.A.~Vilches}.
\\Dpto. de Geometr\'{\i}a y Topolog\'{\i}a, Universidad de Sevilla, Spain.\\}
\email{vilches@us.es}%

}

 \end{document}